\begin{document}

\mainmatter              
\title{Convergence of the Neumann-Neumann Method for the Cahn-Hilliard Equation}
\titlerunning{NN for the CH equation}
\author{Gobinda Garai \inst{1}}
\institute{
School of Basic Sciences (Mathematics), Indian Institute of Technology Bhubaneswar, India.
\email{gg14@iitbbs.ac.in}}

\maketitle

\abstract{In this paper, we analyze a substructuring type algorithm for the Cahn-Hilliard (CH) equation. Being a nonlinear equation, it is of great importance to develop efficient numerical schemes for investigating the solution behaviour of the CH equation. We present the formulation of  Neumann-Neumann (NN) method applied to the CH equation and investigate the convergence behaviour of the same in one and two spatial dimension for two subdomains. We illustrate the theoretical results by providing numerical example.}

\keywords{Neumann-Neumann, Domain Decomposition, Parallel computing, Cahn-Hilliard equation}

\section{Introduction}
The Cahn-Hilliard equation
\begin{equation}\label{mixedCH}
\begin{aligned}
\frac{\partial u}{\partial t} & = \Delta v, \,\ \mbox{ for} \,\ (x,t)\in\Omega\times(0, T],\\
v & = F'(u) - \epsilon^2\Delta u, \,\ \mbox{ for} \,\ (x,t)\in\Omega\times(0, T],\\
u(x, 0) & = u_0(x), \,\ x\in\Omega,
\end{aligned}
\end{equation}
where $\Omega \subset\mathbb{R}^d (d=1,2)$, with the Neumann boundary condition $\frac{\partial u}{\partial n} = \frac{\partial v}{\partial n} = 0 \mbox{ on} \,\ \partial\Omega,$ for all $t\in(0, T]$,
has been suggested to represent the evolution of a binary melted alloy below the critical temperature in \cite{Cahn,Hilliard}. The solution of \eqref{mixedCH} involves two different dynamics, one is phase separation  which is quick in time, and another is phase coarsening which is relatively  slower in time. The fine-scale phase regions are emerged in the early period and  separated by the interface which is of width $\epsilon$. Whereas during phase coarsening, the solution lean toward an equilibrium state which reduces the internal energy. This means, the equation \eqref{mixedCH} describes the total mass conservation and energy minimization while the system evolves.
The energy minimization is a key feature of \eqref{mixedCH} and is expected to be preserved by numerical method and to deal with that, Eyre \cite{David,Eyre} proposed an unconditionally gradient stable scheme. The idea is to split the homogeneous free energy $F(u) = 0.25(u^2 - 1)^2$ into the sum of a convex and a concave term, and then treating the convex term implicitly and the concave term explicitly to obtain, for example, a first order in time and 2nd order in space approximation for \eqref{mixedCH} in 1D:
\begin{equation}\label{1stchdis}
\begin{aligned}
u_j^{n+1} & = \delta\Delta _hv_j^{n+1} + u_j^{n},\\
v_j^{n+1} & = (u_j^{n+1})^3 -u_j^{n} - \epsilon^2\Delta _hu_j^{n+1}, 
\end{aligned}
\end{equation}
where $\delta$ is the time step and $\Delta_h$ is the discrete Laplacian.
The equation \eqref{1stchdis} represents a system of nonlinear coupled equation due to the cubic term. To linearise the problem, the term $(u_j^{n+1})^3$ is rewritten as $(u_j^{n})^2u_j^{n+1}$, which results the following modified system 
\begin{align*}
u_j^{n+1} - \delta\Delta _hv_j^{n+1} & = u_j^{n},\\
v_j^{n+1} + \epsilon^2\Delta _hu_j^{n+1} - (u_j^{n})^2u_j^{n+1} & = -u_j^{n}, 
\end{align*}
which is also an unconditionally gradient stable scheme and has the same accuracy as the nonlinear scheme \eqref{1stchdis} \cite{David}. Therefore one has to solve the following system of elliptic equations at each time level
\begin{equation}\label{MF1}
\begin{aligned}
\begin{bmatrix} 
I & -\delta\Delta \\
\epsilon^2\Delta-c^2 & I 
\end{bmatrix}
\quad
\begin{bmatrix} 
{\bar{u}} \\
{\bar{v}} 
\end{bmatrix}
\quad 
 = \quad \begin{bmatrix} 
f_{\bar{u}} \\
f_{\bar{v}} 
\end{bmatrix},
\quad \mbox{in}\,\ \Omega,
\end{aligned}
\end{equation}
where $c = u_j^{n}, \bar{u} = u_j^{n+1}, \bar{v} = v_j^{n+1}, f_{\bar{u}} = c \,\ \text{and}\,\ f_{\bar{v}} = -c.$
In 2D, one also gets the above system \eqref{MF1} at each time level for suitably chosen $f_{\bar{u}}, f_{\bar{v}}, c$. It is worth mentioning that many other basic algorithms approximating the solution of the CH equation can be reformulated as \eqref{MF1}, for example the semi-implicit Euler's scheme \cite{KimLee}, the Linearly stabilized splitting scheme \cite{EsedoAndrea}. 

Since the spatial mesh size $h$ is $O(\epsilon)$ or even finer, the linear system \eqref{MF1} will result in a massive algebraic structure and which needs to be solved serially for getting the long term solution of \eqref{mixedCH}. Thus, it is of great importance to fast-track the simulation using parallel computation, which can be fulfilled by domain decomposition techniques \cite{LionsI,TosWid}. In this work, we lay our efforts on the Neumann-Neumann method \cite{bourgat1988variational,le1991domain,BjWid}.
The main objective of our work is to solve the problem \eqref{MF1} with the imposed transmission condition and analyze the convergence  behaviour for two subdomain setting in 1D and 2D.

We introduce the NN algorithm in one and two spatial dimension for two subdomains, and state our convergence result in Section \ref{Section2}. To illustrate our analysis, the accuracy and robustness of the proposed formulation, we present the numerical results in Section \ref{Section3}.

\section{The Neumann Neumann Method}\label{Section2}

In this section, we introduce the Neumann-Neumann method for the second order elliptic system \eqref{MF1}. For convenience we use the notation $u, v$ instead of $\bar{u}, \bar{v}$ and rewrite the system \eqref{MF1} as
\begin{equation}\label{modelproblem}
\begin{aligned}
\begin{bmatrix} 
I & -\delta\Delta \\
\epsilon^2\Delta-c^2 & I 
\end{bmatrix}
\begin{bmatrix} 
{u} \\
{v} 
\end{bmatrix}
 = \begin{bmatrix} 
f_{{u}} \\
f_{{v}} 
\end{bmatrix},
\quad \mbox{in}\,\ \Omega,
\end{aligned}
\end{equation}
together with the physical boundary condition $\mathcal{B}\begin{bmatrix} 
u \\
v 
\end{bmatrix}
=0.$

We present the NN method for the  problem
\eqref{modelproblem} on the spatial domain $\Omega$
with Neumann boundary conditions on $\partial\Omega$.
Suppose the spatial domain $\Omega$ is partitioned into two
non-overlapping subdomains $\Omega_{1}, \Omega_{2}$. We denote 
$u_{j}$ the restriction of the solution $u$ of \eqref{modelproblem} to $\Omega_{j}$ for $j=1, 2$ and set
$\Gamma:=\partial\Omega_{1}\cap\partial\Omega_{2}$.

The NN algorithm for the CH system \eqref{modelproblem} starts with initial guesses $g^{[0]}, h^{[0]}$ along the interface $\Gamma$ and solve for $k = 1, 2, ...$
\begin{equation}\label{NNCH}
\begin{aligned}
&\left\{
\begin{aligned}
\begin{bmatrix} 
I & -\delta\Delta \\
\epsilon^2\Delta-c^2 & I 
\end{bmatrix}
\begin{bmatrix} 
u_j^{[k]} \\
v_j^{[k]} 
\end{bmatrix}
  = \begin{bmatrix} 
f_u \\
f_v 
\end{bmatrix},\,\ \text{in} \,\ \Omega_j, \\
\mathcal{B}\begin{bmatrix} 
u_j^{[k]} \\
v_j^{[k]} 
\end{bmatrix}
 = 0,\,\ \mbox{on}\,\ \partial\Omega_j\cap\partial\Omega,\\
\begin{bmatrix} 
u_j^{[k]} \\
v_j^{[k]} 
\end{bmatrix}
 = 
\begin{bmatrix} 
g^{[k-1]} \\
h^{[k-1]} 
\end{bmatrix},
\,\ \mbox{on}\,\ \Gamma,\\
\end{aligned}\right.
&
\left\{\begin{aligned}
\begin{bmatrix} 
I & -\delta\Delta \\
\epsilon^2\Delta-c^2 & I 
\end{bmatrix}
\begin{bmatrix} 
\phi_j^{[k]} \\
\psi_j^{[k]} 
\end{bmatrix}
  = 0, \,\ \text{in} \,\ \Omega_j, \\
\mathcal{B}\begin{bmatrix} 
\phi_j^{[k]} \\
\psi_j^{[k]} 
\end{bmatrix}
 = 0,\,\ \mbox{on}\,\ \partial\Omega_j\cap\partial\Omega,\\
\frac{\partial}{\partial n}\begin{bmatrix} 
\phi_j^{[k]} \\
\psi_j^{[k]}
\end{bmatrix} =
\frac{\partial}{\partial n}\begin{bmatrix} 
u_1^{[k]} - u_2^{[k]} \\
v_1^{[k]} - v_2^{[k]} 
\end{bmatrix},
\,\ \mbox{on}\,\ \Gamma.\\
\end{aligned}\right. \\
\end{aligned}
\end{equation}
Then we update the interface trace by
\[
\begin{bmatrix} 
g^{[k]} \\
h^{[k]} 
\end{bmatrix}
 = \begin{bmatrix} 
g^{[k-1]} \\
h^{[k-1]}
\end{bmatrix} -
  \theta \begin{bmatrix} 
\phi_1^{[k]} - \phi_2^{[k]} \\
\psi_1^{[k]} - \psi_2^{[k]} 
\end{bmatrix}_{\big|_{\Gamma}},
\]
where $\theta\in(0, 1)$ is the relaxation parameter. For the convergence analysis we consider the error equation, which is a homogeneous version of \eqref{NNCH} by the virtue of linearity. The goal of our study is to analyze how the error $g^{[k]}, h^{[k]}$ converges to zero as $k\rightarrow\infty$.
\subsection{Convergence analysis in 1D}
Let the spatial domain $\Omega = (-a, b)$ is decomposed into $\Omega_1 = (-a, 0)$ and $\Omega_2 = (0, b)$ with interface $\Gamma=\{0\}$. We consider the error equation described in \eqref{NNCH} by taking $f_u = 0, f_v = 0$. The system \eqref{NNCH} can be rewritten in 1D as:
\begin{equation}\label{NNerr1}
\begin{aligned}
&\left\{
\begin{aligned}
A E_j^{[k]} & = 0, \quad \quad \text{in} \,\ \Omega_j \\  
 E_j^{[k]} & = \begin{bmatrix} 
 g^{[k-1]} \\
 h^{[k-1]} 
\end{bmatrix}, \quad \text{on} \,\ \Gamma \\
\frac{\partial}{\partial x}\ E_j^{[k]} & = 0, \quad \quad \text{on} \,\ \partial\Omega_j \cap\partial\Omega\\
  \end{aligned}\right.
&
\left\{\begin{aligned}
A F_j^{[k]} & = 0, \quad \quad \text{in} \,\ \Omega_i \\
\frac{\partial}{\partial x}\ F_j^{[k]} & = \frac{\partial}{\partial x}
\left[E_1^{[k]} - E_2^{[k]} \right]
, \quad \text{on} \,\ \Gamma \\
\frac{\partial}{\partial x}\ F_j^{[k]} & = 0, \quad \text{on} \,\ \partial\Omega_j\cap\partial\Omega\\
  \end{aligned}\right. \\
\end{aligned}
\end{equation}
with the update condition
\begin{equation}\label{update1d}
\begin{bmatrix} 
g^{[k]} \\
h^{[k]} 
\end{bmatrix} 
= 
\begin{bmatrix} 
 g^{[k-1]} \\
 h^{[k-1]} 
\end{bmatrix}
- 
\theta
\left[F_1^{[k]} - F_2^{[k]}\right]_{\big|_{\Gamma}},
\end{equation}
where
\begin{equation*}
 A = 
\begin{bmatrix} 
1 & -\delta\frac{d^2}{dx^2} \\
\epsilon^2\frac{d^2}{dx^2} - c^2 & 1 
\end{bmatrix},\quad
 E_j^{[k]} = \begin{bmatrix} 
 u_j^{[k]}(x) \\
 v_j^{[k]}(x)
\end{bmatrix},\quad
 F_j^{[k]} = \begin{bmatrix} 
 \phi_j^{[k]}(x) \\
 \psi_j^{[k]}(x)
\end{bmatrix},
\end{equation*}
where $E_j, F_j$ are the subdomain solutions in $\Omega_j$ for $j=1,2$ at the $k-$th iteration.

To analyze the convergence of the algorithm \eqref{NNCH}, we need to determine the subdomain solutions by solving the following algebraic equations
\begin{equation}\label{Algebraiceq}
AE_j =0, j=1,2.
\end{equation}
We assume the solution of the equation \eqref{Algebraiceq} is of the following form
\begin{equation}
E_j=\Psi_j e^{\xi x}, \text{where}\; \xi\; \text{is parameter to be determined.}
\end{equation}
Inserting the above expression of $E_j$ into equation \eqref{Algebraiceq} and using the non-vanishing property of exponential term, we get
\begin{equation}\label{coefficientmatrix}
\begin{bmatrix} 
1 & -\delta\xi^2\\
\epsilon^2\xi^2-c^2 & 1 
\end{bmatrix}
\Psi_j = 0.
\end{equation}
Now the equation \eqref{coefficientmatrix} has non-trivial solutions only if the coefficient matrix is singular, i.e., the determinant of the coefficient matrix 
is zero, 
\begin{equation*}
\det
\begin{bmatrix} 
1 & -\delta\xi^2\\
\epsilon^2\xi^2-c^2 & 1 
\end{bmatrix}
 = 0.
\end{equation*}
Solving this equation yields
$\xi_{1,2} = \pm\sqrt{\lambda_1},\quad \xi_{3,4} = \pm\sqrt{\lambda_2}$, where $\lambda_{1,2}$ are given by 
\begin{equation*}
\lambda_{1,2} = \frac{c^2\delta\pm \sqrt{c^4\delta^2-4\epsilon^2\delta}}{2\epsilon^2\delta}
\end{equation*}
respectively. Thus the solution of \eqref{Algebraiceq} has the following form 
\begin{equation*}\label{generalsol}
E_j^{[k]} = \sum_{l=1} ^4 \zeta_{j,l}^{[k]}\mu_l e^{\xi_l x}, 
\end{equation*}
where for each $l$, $\zeta_{j,l}$ are constant weights for $j=1,2$ and $\mu_l$ is the  eigenvector of the coefficient matrix in  \eqref{coefficientmatrix} associated to the eigenvalue zero for $\xi = \xi_l$, and is explicitly given by 
\begin{equation*}
\mu_{1} = \mu_{2} = \begin{bmatrix} 
\delta\lambda_1 \\
1 
\end{bmatrix},\quad \mu_{3} = \mu_{4} = \begin{bmatrix} 
\delta\lambda_2\\
1 
\end{bmatrix}.
\end{equation*}
Using the transmission conditions on the interface $\Gamma$ and the physical boundary conditions on $\partial\Omega_{i}\backslash\Gamma$, we can get rid of the constants $\zeta_{j,l}$ for each subdomain. Set $\sigma_{i,x}:=\sinh(\xi_i x), \sigma_{i,a}:=\sinh(\xi_i a), \sigma_{i,b}:=\sinh(\xi_i b), \gamma_{i,x}:=\cosh(\xi_i x),  \gamma_{i,a}:=\cosh(\xi_i a),  \gamma_{i,b}:=\cosh(\xi_i b)$ for $i=1,3$ and 
$\eta_1 = \frac{g^{k-1}-\delta\lambda_2 h^{k-1}}{\delta\lambda}, \eta_2 = \frac{g^{k-1}-\delta\lambda_1 h^{k-1}}{-\delta\lambda}$ with
$\lambda = \lambda_1 - \lambda_2$, and $\nu_i = \frac{\sigma_{i,a}}{\gamma_{i,a}} + \frac{\sigma_{i,b}}{\gamma_{i,b}}$ for $i = 1, 3$. We have the subdomain solution at $k-$th iteration for Dirichlet step
\[
E_1^{[k]} = 
\begin{bmatrix} 
\mu_1 & \mu_3
\end{bmatrix} 
\begin{bmatrix} 
\gamma_{1,x} + \frac{\sigma_{1,a} \sigma_{1,x}}{\gamma_{1,a}} & 0 \\
0 &  \gamma_{3,x} + \frac{\sigma_{3,a} \sigma_{3,x}}{\gamma_{3,a}}
\end{bmatrix}
\begin{bmatrix} 
\eta_1 \\
\eta_2
\end{bmatrix},
\]

\[
E_2^{[k]} = 
\begin{bmatrix} 
\mu_1 & \mu_3
\end{bmatrix} 
\begin{bmatrix} 
\gamma_{1,x} - \frac{\sigma_{1,b} \sigma_{1,x}}{\gamma_{1,b}} & 0 \\
0 &  \gamma_{3,x} - \frac{\sigma_{3,b} \sigma_{3,x}}{\gamma_{3,b}}
\end{bmatrix}
\begin{bmatrix} 
\eta_1 \\
\eta_2 
\end{bmatrix},
\]
and for Neumann step we solve $AF_j = 0$ for $j=1,2$ with physical boundary conditions on $\partial\Omega_{i}\backslash\Gamma$ and imposed transmission condition on the interface $\Gamma$ and get the subdomain solution at $k-$th iteration,
\[
F_1^{[k]} = 
\begin{bmatrix} 
\mu_1 & \mu_3
\end{bmatrix} 
\begin{bmatrix} 
\sigma_{1,x}\nu_1  + \frac{\gamma_{1,x}\gamma_{1,a}\nu_1}{\sigma_{1,a}} & 0 \\
0 & \sigma_{3,x}\nu_3  + \frac{\gamma_{3,x}\gamma_{3,a}\nu_3}{\sigma_{3,a}}
\end{bmatrix}
\begin{bmatrix} 
\eta_1 \\
\eta_2 
\end{bmatrix},
\]

\[
F_2^{[k]} = 
\begin{bmatrix} 
\mu_1 & \mu_3
\end{bmatrix} 
\begin{bmatrix} 
\sigma_{1,x}\nu_1  - \frac{\gamma_{1,x}\gamma_{1,b}\nu_1}{\sigma_{1,b}} & 0 \\
0 & \sigma_{3,x}\nu_3  - \frac{\gamma_{3,x}\gamma_{3,b}\nu_3}{\sigma_{3,b}}
\end{bmatrix}
\begin{bmatrix} 
\eta_1 \\
\eta_2 
\end{bmatrix}.
\] 
Using the subdomain solutions in \eqref{update1d} we get the recurrence relation
\begin{equation}\label{NN1diteration}
 \begin{bmatrix} 
g^{[k]} \\
h^{[k]} 
\end{bmatrix} 
= \mathbb{P}
\begin{bmatrix} 
 g^{[k-1]} \\
 h^{[k-1]}
\end{bmatrix},
\end{equation}
where the iteration matrix $\mathbb{P}$ is given by
\begin{equation*}
 \mathbb{P} = 
\begin{bmatrix} 
1 - \theta\frac{\lambda_1\rho_1 - \lambda_2\rho_2}{\lambda} & \theta\frac{\delta\lambda_1\lambda_2(\rho_1 - \rho_2)}{\lambda} \\
\\
\theta\frac{\rho_2 - \rho_1}{\delta\lambda} & 1 - \theta\frac{\lambda_1\rho_2 - \lambda_2\rho_1}{\lambda}
\end{bmatrix} ,
\end{equation*}
with $ \rho_1 = 2 + \frac{\sigma_{1,a}\gamma_{1,b}}{\gamma_{1,a}\sigma_{1,b}} + \frac{\sigma_{1,b}\gamma_{1,a}}{\gamma_{1,b}\sigma_{1,a}},\; \rho_2 = 2 + \frac{\sigma_{3,a}\gamma_{3,b}}{\gamma_{3,a}\sigma_{3,b}} + \frac{\sigma_{3,b}\gamma_{3,a}}{\gamma_{3,b}\sigma_{3,b}}$.

\begin{theorem}[Convergence of NN for symmetric case]
When the subdomains are of same size, $a=b$, the NN algorithm for two subdomains converges linearly for $0<\theta<1/2, \theta\neq\ 1/4$. Moreover for $\theta = 1/4$, it converges in two iterations.
\end{theorem}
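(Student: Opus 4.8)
The plan is to reduce the entire statement to a spectral analysis of the explicitly computed iteration matrix $\mathbb{P}$ in \eqref{NN1diteration}, exploiting the symmetry $a=b$ to collapse it to a scalar multiple of the identity.

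First I would substitute $a=b$ into the quantities $\rho_1$ and $\rho_2$. Since $a=b$ forces $\sigma_{i,a}=\sigma_{i,b}$ and $\gamma_{i,a}=\gamma_{i,b}$ for $i=1,3$, each of the two hyperbolic ratios appearing in $\rho_1$ and in $\rho_2$ equals $1$, so that $\rho_1=\rho_2=4$. The key structural consequence is that the off-diagonal entries of $\mathbb{P}$, which are both proportional to $\rho_1-\rho_2$, vanish identically, while the two diagonal entries coincide: writing $\rho:=\rho_1=\rho_2=4$ and using $\lambda=\lambda_1-\lambda_2$, the common diagonal value is $1-\theta\frac{\lambda_1\rho-\lambda_2\rho}{\lambda}=1-4\theta$. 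Hence $\mathbb{P}=(1-4\theta)\,I$.

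With this reduction the recurrence \eqref{NN1diteration} becomes $\big[g^{[k]},h^{[k]}\big]^{\top}=(1-4\theta)^{k}\big[g^{[0]},h^{[0]}\big]^{\top}$, so the convergence factor is exactly $|1-4\theta|$. I would then read off the two assertions. The spectral radius is strictly less than $1$ precisely when $0<\theta<1/2$, giving linear convergence at rate $|1-4\theta|$ whenever this factor is nonzero, i.e. for $\theta\neq 1/4$. For the borderline value $\theta=1/4$ we have $\mathbb{P}=0$, so the updated interface trace satisfies $g^{[1]}=h^{[1]}=0$ after the first sweep; feeding this zero datum into the Dirichlet subdomain problems of the next step yields $E_j^{[2]}\equiv 0$, i.e. the exact (vanishing) error throughout both subdomains, which is the content of convergence in two iterations.

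These computations are routine once the symmetry is invoked, so I expect the only delicate points to be bookkeeping: first, verifying carefully that both off-diagonal entries are genuinely multiples of $\rho_1-\rho_2$ (so that they truly vanish) rather than of some less symmetric combination; and second, reconciling the ``two iterations'' claim with the fact that $\mathbb{P}=0$ annihilates the interface error in a single application of the update. The resolution of the latter is precisely the distinction between driving the transmission data to zero (one update) and realizing the exact solution in the subdomain interiors through the subsequent Dirichlet solve, and I would make this counting convention explicit in order to avoid an apparent off-by-one discrepancy.
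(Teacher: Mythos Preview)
Your proposal is correct and follows essentially the same route as the paper: substitute $a=b$ to obtain $\rho_1=\rho_2=4$, reduce $\mathbb{P}$ to $(1-4\theta)I$, and read off the convergence behaviour from the scalar factor $|1-4\theta|$. Your treatment is in fact more explicit than the paper's, which simply writes down the diagonal matrix without justifying $\rho_1=\rho_2=4$ and does not discuss the iteration-counting convention you flag at the end.
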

\begin{proof}
If $a=b$, then \eqref{NN1diteration} becomes
\begin{equation*}
\begin{bmatrix} 
g^{[k]} \\
h^{[k]} 
\end{bmatrix} 
=  \begin{bmatrix} 
1-4\theta & 0 \\
0 & 1-4\theta 
\end{bmatrix}
\begin{bmatrix} 
g^{[k-1]} \\
h^{[k-1]} 
\end{bmatrix},
\end{equation*}
which means the method converges to the exact solution in two iterations for $\theta=1/4$. Also, the algorithm diverges if $\vert 1-4\theta\vert\geq 1$, i.e if $\theta\geq 1/2$. Thus the convergence is linear for $0<\theta<1/2, \theta\neq 1/4$.
\end{proof}

We now study the convergence results for unequal sudomains with the special value of $\theta$ being $1/4$. Before going to the main result, we prove the following Lemma, which is needed for future analysis.

\begin{lemma}\label{NNlemma}
The function $f(t)=\frac{\sinh^2((a - b)t)}{\sinh (2at) \sinh (2bt)}$ with $a, b > 0$ has the following properties
\begin{enumerate}
\item $\forall t > 0, 0 < f(t) < \frac{(a - b)^2}{4ab}$.\label{item:one}
\item $\forall t > 0, f(t)$ is a monotonically decreasing function.\label{item:second}
\end{enumerate}
\end{lemma}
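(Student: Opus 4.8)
The plan is to treat the two claims in the reverse of their stated order, establishing the monotonicity of item \ref{item:second} first and then deducing the bound of item \ref{item:one} from it, since the supremum $\frac{(a-b)^2}{4ab}$ is exactly the right-endpoint limit of $f$. Throughout I may assume without loss of generality that $a>b$, because $f$ is symmetric under $a\leftrightarrow b$ (the numerator depends only on $(a-b)^2$ and the denominator on the unordered pair), and $a=b$ is the degenerate case $f\equiv 0$. Positivity of $f$ on $(0,\infty)$ is then immediate: for $a\neq b$ the numerator $\sinh^2((a-b)t)$ is strictly positive, and both factors $\sinh(2at),\sinh(2bt)$ in the denominator are positive for $t>0$.

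For the monotonicity I would work with the logarithmic derivative. Writing $\frac{f'(t)}{f(t)} = 2(a-b)\coth((a-b)t) - 2a\coth(2at) - 2b\coth(2bt)$, the goal becomes showing this quantity is negative for all $t>0$. The key device is the reparametrization $m=a+b$, $n=a-b$, under which the three arguments $(a-b)t,\,2at,\,2bt$ become $nt,\,(m+n)t,\,(m-n)t$; introducing the auxiliary function $G(x):=x\coth(xt)$ for fixed $t>0$, the expression collapses to $2\bigl(G(n)-\tfrac12 G(m+n)-\tfrac12 G(m-n)\bigr)$. Thus it suffices to prove $G(n)<\tfrac12\bigl(G(m+n)+G(m-n)\bigr)$, and since $m+n$ and $m-n$ are symmetric about $m>n>0$, this follows at once if $G$ is strictly increasing and strictly convex on $(0,\infty)$: convexity gives $\tfrac12\bigl(G(m+n)+G(m-n)\bigr)>G(m)$, and monotonicity gives $G(m)>G(n)$.

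The heart of the argument, and the step I expect to be the main obstacle, is verifying that $G$ is increasing and convex. Since $G(x)=\tfrac1t H(xt)$ with $H(y):=y\coth y$, both properties reduce to $H'>0$ and $H''>0$ on $(0,\infty)$. For $H'$ I would use $H'(y)=\frac{\frac12\sinh(2y)-y}{\sinh^2 y}$, whose numerator is positive because $\sinh z>z$ for $z>0$. For the convexity I would compute $H''(y)=\frac{2}{\sinh^2 y}\,(y\coth y-1)$ and reduce the sign question to the elementary inequality $y\cosh y>\sinh y$ for $y>0$, which follows by noting that $p(y)=y\cosh y-\sinh y$ satisfies $p(0)=0$ and $p'(y)=y\sinh y>0$. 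This confirms $\frac{f'}{f}<0$, hence $f$ is strictly decreasing.

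Finally, for item \ref{item:one} the upper bound is obtained by combining the monotonicity with the boundary value. Using $\sinh(s)=s+O(s^3)$ one computes $\lim_{t\to 0^+} f(t)=\lim_{t\to 0^+}\frac{(a-b)^2 t^2}{(2at)(2bt)}=\frac{(a-b)^2}{4ab}$; since $f$ is strictly decreasing and approaches this value as $t\to 0^+$, we obtain $f(t)<\frac{(a-b)^2}{4ab}$ for every $t>0$, while positivity was already noted. As a consistency check, $f(t)\to 0$ as $t\to\infty$ (the exponent $|a-b|-(a+b)<0$ dominates), which matches the decreasing behaviour.
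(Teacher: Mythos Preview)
Your proof is correct and follows essentially the same route as the paper: compute the logarithmic derivative $2(a-b)\coth((a-b)t)-2a\coth(2at)-2b\coth(2bt)$ and reduce its negativity to the strict monotonicity and convexity of $y\mapsto y\coth y$, then read off the upper bound from the $t\to 0^+$ limit. You supply more detail than the paper does (the explicit verification of $H'>0$ and $H''>0$, and the limit computation), but the strategy is identical.
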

\begin{proof}
Clearly $f(t)>0, \forall t > 0$. We prove \eqref{item:second} and the upper bound estimate in \eqref{item:one} will follow from that. Taking the derivative of logarithm of $f(t)$ with respect to $t$ we get
$$\frac{d}{dt}(\ln(f(t))) = 2(a-b)\coth((a-b)t) - 2a\coth(2at) - 2b\coth(2bt).$$
Now $f'(t)<0$ iff $(a-b)\coth((a-b)t)< a\coth(2at) + b\coth(2bt)$, which is true, since the function $t\coth(t)$ is strictly convex and increasing for $t > 0$. This completes the result.
\end{proof}

\begin{theorem}[Convergence of NN for non-symmetric case]\label{NN1dconv}
For $\theta = 1/4$ the error of the NN method for two subdomains satisfies,
\[
\parallel g^{[k]} \parallel_{L^{\infty}(\Gamma)} \leq
\begin{cases}
 \left(\frac{(a - b)^2}{4ab}\right)^k \max\left\{\parallel g^{[0]}\parallel_{L^{\infty}(\Gamma)}, \parallel h^{[0]}\parallel_{L^{\infty}(\Gamma)}\right\}, &\text{if}\; \delta > \frac{4\epsilon^2}{c^4},\\
\left(\frac{(a - b)^2}{2ab}\right)^k \max\left\{\parallel g^{[0]}\parallel_{L^{\infty}(\Gamma)}, \parallel h^{[0]}\parallel_{L^{\infty}(\Gamma)}\right\}, &\text{if}\; \delta < \frac{4\epsilon^2}{c^4}.
 \end{cases}
\]
\end{theorem}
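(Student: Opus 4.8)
The plan is to read \eqref{NN1diteration} as a \emph{constant}-coefficient linear iteration $w^{[k]}=\mathbb{P}\,w^{[k-1]}$ with $w=(g,h)^{T}$, so that the whole question reduces to the spectrum and powers of the fixed $2\times 2$ matrix $\mathbb{P}$. First I would fix $\theta=1/4$ and write $\mathbb{P}=I-\tfrac14 M$. A direct computation using $\lambda=\lambda_1-\lambda_2$ gives $\operatorname{tr}M=\rho_1+\rho_2$, while for the determinant the diagonal product and the off-diagonal product combine so that $\det M=\rho_1\rho_2$ (the $(\lambda_1-\lambda_2)^2$ arising in the numerator cancels the $\lambda^2$ in the denominator). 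Hence the characteristic polynomial of $M$ is $(x-\rho_1)(x-\rho_2)$ and the eigenvalues of $\mathbb{P}$ are exactly $1-\rho_1/4$ and $1-\rho_2/4$.

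Next I would connect these eigenvalues to Lemma \ref{NNlemma}. Since each $\sigma_{i,\cdot}/\gamma_{i,\cdot}$ is a hyperbolic tangent, each $\rho_i$ collapses to $(T_a+T_b)^2/(T_aT_b)$ with $T_a=\tanh(\xi a)$, $T_b=\tanh(\xi b)$ at $\xi=\xi_1$ (resp. $\xi_3$), so that $1-\rho_i/4=-(T_a-T_b)^2/(4T_aT_b)$. The identity $T_a-T_b=\sinh(\xi(a-b))/(\cosh\xi a\,\cosh\xi b)$ together with $2\sinh\cosh=\sinh 2(\cdot)$ then rewrites this as exactly $-f(\xi_1)$, resp. $-f(\xi_3)$, where $f$ is the function of Lemma \ref{NNlemma}. (As a check, $a=b$ forces $f\equiv 0$, i.e. both eigenvalues vanish, recovering the two-step convergence of the symmetric case.) I would also verify that $\mu_1=(\delta\lambda_1,1)^T$ and $\mu_3=(\delta\lambda_2,1)^T$ are the eigenvectors of $M$, so that the modal coordinates $(\eta_1,\eta_2)^T=S^{-1}(g,h)^T$ with $S=\begin{bmatrix}\mu_1 & \mu_3\end{bmatrix}$ — the quantities $\eta_1,\eta_2$ already introduced above — decouple the iteration into $\eta_1^{[k]}=(-f(\xi_1))^k\eta_1^{[0]}$ and $\eta_2^{[k]}=(-f(\xi_3))^k\eta_2^{[0]}$.

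With the spectrum in hand, I would split on the sign of $c^4\delta^2-4\epsilon^2\delta$. When $\delta>4\epsilon^2/c^4$ the roots $\lambda_{1,2}$ of $\delta\epsilon^2\lambda^2-\delta c^2\lambda+1=0$ are real with positive sum $c^2/\epsilon^2$ and positive product $1/(\delta\epsilon^2)$, hence $\xi_1,\xi_3>0$ are real and Lemma \ref{NNlemma} applies verbatim to give $0<f(\xi_1),f(\xi_3)<(a-b)^2/(4ab)$; this yields the first branch. The case $\delta<4\epsilon^2/c^4$ is the real obstacle: now $\lambda_1,\lambda_2$ are complex conjugates, $\xi_3=\overline{\xi_1}$, and the eigenvalues $-f(\xi_1),-\overline{f(\xi_1)}$ are complex, so the Lemma is unavailable and I must estimate $|f(\xi_1)|$ for complex argument. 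Writing $\xi_1=\alpha+i\beta$ and applying $|\sinh(x+iy)|^2=\sinh^2x+\sin^2y$ to each factor of $f$, the oscillatory $\sin^2$ terms in the numerator spoil the clean real bound; controlling them is what I expect to cost the extra factor of two, upgrading the rate to $(a-b)^2/(2ab)$.

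Finally, since each modal component decays geometrically at the rate just bounded, mapping back through $S$ and evaluating on the single-point interface $\Gamma$ produces the stated bound on $\|g^{[k]}\|_{L^{\infty}(\Gamma)}$ against $\max\{\|g^{[0]}\|_{L^{\infty}(\Gamma)},\|h^{[0]}\|_{L^{\infty}(\Gamma)}\}$. The entire argument is thus the spectral reduction plus a direct appeal to Lemma \ref{NNlemma}, with the sole nontrivial computation being the complex-modulus estimate of $f$ in the subcritical regime $\delta<4\epsilon^2/c^4$.
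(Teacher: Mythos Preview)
Your plan is correct and matches the paper's proof: compute the eigenvalues of $\mathbb{P}$ as $1-\rho_i/4=-f(\xi_i)$, invoke Lemma~\ref{NNlemma} when $\xi_1,\xi_3$ are real, and estimate $|f(\xi_1)|$ via $|\sinh(x+iy)|^2=\sinh^2x+\sin^2y$ when they are complex conjugates. For the step you flag as the obstacle, the paper simply notes that $\lambda_{\Re}=c^2/(2\epsilon^2)>0$ forces $\xi_{\Im}<\xi_{\Re}$, so $\sin^2((a-b)\xi_{\Im})<\sinh^2((a-b)\xi_{\Im})<\sinh^2((a-b)\xi_{\Re})$ and the numerator is at most $2\sinh^2((a-b)\xi_{\Re})$, while the denominator is bounded below by $\sinh(2a\xi_{\Re})\sinh(2b\xi_{\Re})$; Lemma~\ref{NNlemma} then gives the factor $(a-b)^2/(2ab)$.
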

\begin{proof}
When $\delta > \frac{4\epsilon^2}{c^4}$, it is clear that $\lambda_{1,2}$ are real and positive, so are $\xi_{1,3}$. The iteration matrix $\mathbb{P}$ in \eqref{NN1diteration} has eigenvalues $(1 - \frac{\rho_1}{4}), (1 - \frac{\rho_2}{4})$. Hence, the convergence is reached iff the spectral radius $\rho(\mathbb{P}) = \max\{\vert 1 - \frac{\rho_1}{4}\vert, \vert 1 - \frac{\rho_2}{4}\vert\}$ is less than one. Upon simplification, we get the expression of the eigenvalues as: $-\frac{\sinh^2((a - b)\xi_i)}{\sinh (2a\xi_i) \sinh (2b\xi_i)}$, for $i = 1, 3$. Using the Lemma \eqref{NNlemma} we have the required estimate.\\
For $\delta < \frac{4\epsilon^2}{c^4}$, $\lambda_1$ and  $\lambda_2$ are complex conjugate numbers. We denote $\lambda_{1,2}$ as $\lambda_{1,2}=\lambda_{\Re} \pm i\lambda_{\Im}, i = \sqrt{-1}$, where
\[
\lambda_{\Re}=\frac{ c^2 }{2\epsilon^2},\; \lambda_{\Im}=\frac{\sqrt{4\delta\epsilon^2 - \delta^2 c^4} }{2\delta\epsilon^2}.
\]
 Then $\xi_{1,3}$ takes the form $\xi_1=\xi_{\Re} + i\xi_{\Im}, \xi_3=\xi_{\Re} - i\xi_{\Im}$, where 
 \[
\xi_{\Re}=\sqrt{\frac{\lambda_{\Re}+\sqrt{\lambda_{\Re}^2+\lambda_{\Im}^2}}{2}}, \; \xi_{\Im}=\sqrt{\frac{-\lambda_{\Re}+\sqrt{\lambda_{\Re}^2+\lambda_{\Im}^2}}{2}}.
\]
Clearly, $\xi_{\Re}, \xi_{\Im}$ are positive number. Now if we take the modulus of the complex eigenvalues of the iteration matrix $\mathbb{P}$, we have
\begin{equation*}
\begin{aligned}
\left|-\frac{\sinh^2((a - b)\xi_i)}{\sinh (2a\xi_i) \sinh (2b\xi_i)}\right| & \leq &
\frac{\sinh^2((a-b)\xi_{\Re}) + \sin^2((a-b)\xi_{\Im})}{\sinh(2a\xi_{\Re}) \sinh(2b\xi_{\Re})} \\
 & \leq & \frac{2\sinh^2((a-b)\xi_{\Re})}{\sinh(2a\xi_{\Re}) \sinh(2b\xi_{\Re})} \leq \left(\frac{(a - b)^2}{2ab}\right),
\end{aligned}
\end{equation*} 
the second inequality follows from $\sin^2((a-b)\xi_{\Im}) < \sinh^2((a-b)\xi_{\Im}) < \sinh^2((a-b)\xi_{\Re})$, as $\xi_{\Im} < \xi_{\Re}$, and the last inequality follows from the Lemma \eqref{NNlemma}. Hence the estimate.
\end{proof}
\begin{remark}
The above linear estimate will imply convergence if $\frac{(a - b)^2}{4ab}<1$, i.e., $q^2-6q+1<0$, where $q=a/b$. Hence $q$ must belongs to $\left(\frac{6-\sqrt{32}}{2}, \frac{6+\sqrt{32}}{2}\right)$ when $\delta > \frac{4\epsilon^2}{c^4}$ and for $\delta < \frac{4\epsilon^2}{c^4}$, $q$ is in $(2-\sqrt{3}, 2+\sqrt{3})$. 
\end{remark}
\subsection{Convergence analysis in 2D}
We now analyze the convergence of the NN method for a decomposition with two subdomains in 2D. Let the domain $\Omega = (-a, b)\times(0,L)$ be partitioned into two rectangles, given by $\Omega_1 = (-a, 0)\times(0,L)$ and $\Omega_2 = (0, b)\times(0,L)$.
We analyze this 2D case by transforming it into a collection of 1D problems using the Fourier sine transform along $y$-direction.
Expanding the solution $u_i^{[k]}, v_i^{[k]}, \phi_{i}^{[k]}, \psi_{i}^{[k]}$ for $i=1, 2$ in a Fourier sine series along $y$-direction yields
\[
\begin{array}{cc}
u_{i}^{[k]}(x,y)=\sum_{m\geq1}\hat u_{i}^{[k]}(x,m)\sin(\frac{m\pi y}{L}),\; &
v_{i}^{[k]}(x,y)=\sum_{m\geq1}\hat v_{i}^{[k]}(x,m)\sin(\frac{m\pi y}{L}),\\
\phi_{i}^{[k]}(x,y)=\sum_{m\geq1}\hat \phi_{i}^{[k]}(x,m)\sin(\frac{m\pi y}{L}),\;&
\psi_{i}^{[k]}(x,y)=\sum_{m\geq1}\hat \psi_{i}^{[k]}(x,m)\sin(\frac{m\pi y}{L}).
\end{array}
\]
After a Fourier sine transform, the NN algorithm \eqref{NNCH} for the error equations for 2D CH equation becomes
\begin{equation}\label{NNerr2}
\begin{aligned}
&\left\{
\begin{aligned}
\hat A \hat E_1^{[k]} & = 0, \quad \quad \text{in} \,\ (-a, 0), \\ 
\hat E_1^{[k]} & = \begin{bmatrix} 
\hat g^{[k-1]} \\
\hat h^{[k-1]} 
\end{bmatrix}, \quad \text{at} \,\ x = 0, \\
\frac{\partial}{\partial x}\ \hat E_1^{[k]} & = 0, \quad \quad \text{at} \,\ x = -a,\\
  \end{aligned}\right.
&
\left\{\begin{aligned} 
\hat AE_2^{[k]} & = 0,  \quad \quad \text{in} \,\ (0, b),\\
\hat E_2^{[k]} & = \begin{bmatrix} 
\hat g^{[k-1]} \\
\hat h^{[k-1]} 
\end{bmatrix}, \quad \text{at} \,\ x = 0, \\
\frac{\partial}{\partial x}\ \hat E_2^{[k]} & = 0, \quad \quad \text{at} \,\ x=b,\\ 
  \end{aligned}\right. \\
\end{aligned}
\end{equation}  
   and
\begin{equation}\label{NNerr2}
\begin{aligned}
&\left\{
\begin{aligned}
\hat A \hat F_1^{[k]} & = 0, \quad \quad \text{in} \,\ (-a, 0), \\
\frac{\partial}{\partial x}\ \hat F_1^{[k]} & = \frac{\partial}{\partial x}\left[\hat E_1^{[k]} - \hat E_2^{[k]}\right],\,\text{at} \,\ x = 0,\\
\frac{\partial}{\partial x}\ \hat F_1^{[k]} & = 0, \quad \quad \text{at} \,\ x = -a,\\
 \end{aligned}\right.
&
\left\{\begin{aligned}  
\hat A \hat F_2^{[k]} & = 0,  \quad \quad \text{in} \,\ (0, b),\\
 \frac{\partial}{\partial x}\ \hat F_2^{[k]} & = \frac{\partial}{\partial x}\left[\hat E_1^{[k]} - \hat E_2^{[k]}\right], \, \text{at} \,\ x =0,  \\
\frac{\partial}{\partial x}\ \hat F_2^{[k]} & = 0, \quad \quad \text{at} \,\ x=b,\\ 
 \end{aligned}\right. \\
\end{aligned}
\end{equation}    
and the update condition becomes
\[ 
 \begin{bmatrix} 
\hat g^{[k]} \\
\hat h^{[k]} 
\end{bmatrix} 
= 
\begin{bmatrix} 
\hat g^{[k-1]} \\
\hat h^{[k-1]} 
\end{bmatrix}
 - 
\theta
\left[\hat F_1^{[k]} - \hat F_2^{[k]}\right]_{\big|\{x=0\}},  
\]
where
\[
\hat A = 
\begin{bmatrix} 
1 & -\delta(\frac{d^2}{dx^2} - p_m^2) \\
\epsilon^2(\frac{d^2}{dx^2} - p_m^2) - c^2 & 1 
\end{bmatrix},\;
\hat E_j^{[k]} = 
\begin{bmatrix} 
\hat u_j^{[k]}(x,m) \\
\hat v_j^{[k]}(x,m)
\end{bmatrix},\;
\hat F_j^{[k]} = 
\begin{bmatrix} 
\hat \phi_j^{[k]}(x,m) \\
\hat \psi_j^{[k]}(x,m)
\end{bmatrix},
\]
where $\hat E_j^{[k]}(x,m), \hat F_j^{[k]}(x,m)$ denote the Fourier sine coefficients of $ E_j^{[k]}(x,y)$ and $ F_j^{[k]}(x,y)$, and $p_m^2 = \frac{\pi ^2 m^2}{L^2}$.
We perform similar calculation as in the case of one dimensional analysis and get the recurrence relation as:
\begin{equation}\label{NNreccurence2d}
\begin{bmatrix} 
\hat g^{[k]} \\
\hat h^{[k]} 
\end{bmatrix} 
= \hat {\mathbb{P}}
\begin{bmatrix} 
\hat g^{[k-1]} \\
\hat h^{[k-1]} 
\end{bmatrix},
\end{equation}
where $\hat{\mathbb{P}}$ is the iteration matrix given by   
\[
\hat{\mathbb{P}} = 
\begin{bmatrix} 
1 - \theta\frac{\lambda_1\rho_1 - \lambda_2\rho_2}{\lambda} & \theta\frac{\delta\lambda_1\lambda_2(\rho_1 - \rho_2)}{\lambda} \\
\\
\theta\frac{\rho_2 - \rho_1}{\delta\lambda} & 1 - \theta\frac{\lambda_1\rho_2 - \lambda_2\rho_1}{\lambda}
\end{bmatrix} 
\]
with the expressions $\rho_1, \rho_2$ exactly as defined earlier, except that $\lambda_i$ is replaced by $\lambda_i + p_m^2$ in the expression of $\xi_{1,2}$ and $\xi_{3,4}$.
\begin{theorem}[Convergence of NN in 2D]
When the subdomains are of same size, $a=b$, the NN algorithm converges linearly for $0<\theta<1/2, \theta\neq\ 1/4$, and for $\theta = 1/4$, it converges in two iterations.
When the subdomains are unequal, for $\theta = 1/4$ the error of the NN method for two subdomains satisfies,
\[
\parallel g^{[k]} \parallel_{L^{2}(\Gamma)} \leq
\begin{cases}
 \left(\frac{(a - b)^2}{4ab}\right)^k \max\left\{\parallel g^{[0]}\parallel_{L^{2}(\Gamma)}, \parallel h^{[0]}\parallel_{L^{2}(\Gamma)}\right\}, &\text{if}\; \delta > \frac{4\epsilon^2}{c^4},\\
\left(\frac{(a - b)^2}{2ab}\right)^k \max\left\{\parallel g^{[0]}\parallel_{L^{2}(\Gamma)}, \parallel h^{[0]}\parallel_{L^{2}(\Gamma)}\right\}, &\text{if}\; \delta < \frac{4\epsilon^2}{c^4}.
 \end{cases}
\]
\end{theorem}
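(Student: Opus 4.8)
The plan is to exploit the fact that, after the Fourier sine transform along $y$, the two-dimensional iteration \eqref{NNreccurence2d} decouples into a one-parameter family of one-dimensional iterations indexed by the frequency $m$, each governed by the matrix $\hat{\mathbb{P}}$ whose entries have exactly the algebraic form of the 1D matrix $\mathbb{P}$ with $\xi_i$ replaced by $\hat\xi_i=\sqrt{\lambda_i+p_m^2}$. Thus both assertions follow by applying the 1D arguments mode by mode and then reassembling via Parseval's identity. First I would record the characteristic-root computation for $\hat A$: substituting $\hat E=\Psi e^{\xi x}$ turns $\tfrac{d^2}{dx^2}-p_m^2$ into $\xi^2-p_m^2$, so the singularity condition of the coefficient matrix gives $\xi^2-p_m^2=\lambda_{1,2}$, i.e. $\hat\xi_{1,3}=\sqrt{\lambda_{1,2}+p_m^2}$, which confirms the stated substitution and shows that $\lambda_1,\lambda_2$ themselves are unchanged.

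For the symmetric case $a=b$ I would observe that the algebra leading to $\rho_1=\rho_2=4$ is purely in terms of $\hat\xi_i$ and therefore holds for every mode $m$; consequently $\hat{\mathbb{P}}=(1-4\theta)I$ for all $m$, the off-diagonal terms vanishing because $\rho_1-\rho_2=0$. Applying this mode by mode and summing with Parseval gives $\|g^{[k]}\|_{L^2(\Gamma)}=|1-4\theta|^k\|g^{[0]}\|_{L^2(\Gamma)}$, yielding the same two-step convergence at $\theta=1/4$ and linear convergence for $0<\theta<1/2,\ \theta\ne1/4$ as in the one-dimensional symmetric case.

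For the non-symmetric case with $\theta=1/4$ I would use that the eigenvalues of $\hat{\mathbb{P}}$ are, as in 1D, $-\sinh^2((a-b)\hat\xi_i)/[\sinh(2a\hat\xi_i)\sinh(2b\hat\xi_i)]$ for $i=1,3$. When $\delta>4\epsilon^2/c^4$ each $\lambda_i$ is real and positive, so $\lambda_i+p_m^2>0$ and $\hat\xi_i>0$ for every $m$; Lemma \ref{NNlemma} then bounds each eigenvalue in modulus by $(a-b)^2/(4ab)$, and crucially this bound is the supremum over all positive arguments, hence uniform in $m$. When $\delta<4\epsilon^2/c^4$ the pair $\lambda_i+p_m^2$ is complex with real part shifted up by $p_m^2$ and the same imaginary part, so the splitting $\hat\xi_i=\xi_{\Re}\pm i\xi_{\Im}$ with $\xi_{\Im}<\xi_{\Re}$ persists and the chain of inequalities from Theorem \ref{NN1dconv} delivers the uniform factor $(a-b)^2/(2ab)$. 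Because these bounds hold simultaneously for all $m$, Parseval's identity $\|g^{[k]}\|_{L^2(\Gamma)}^2=\tfrac{L}{2}\sum_{m\ge1}|\hat g^{[k]}(m)|^2$ upgrades the mode-wise contraction to the claimed $L^2(\Gamma)$ estimate.

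The hard part will be making the passage from mode-wise spectral bounds to a genuine norm bound on the iterates, since a bound on the spectral radius of each $\hat{\mathbb{P}}$ does not by itself control the coupled pair $(\hat g^{[k]},\hat h^{[k]})^{\top}=\hat{\mathbb{P}}^{k}(\hat g^{[0]},\hat h^{[0]})^{\top}$. The point I would stress is that $\hat{\mathbb{P}}$ is diagonalized by the change of variables $(g,h)\mapsto(\eta_1,\eta_2)$ already appearing in the 1D derivation, and this transformation depends only on $\lambda_1,\lambda_2,\delta$, which are independent of $m$; hence the diagonalizing map is uniform across all frequencies and no mode-dependent amplification of the eigenvalue bound can occur. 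Verifying this uniformity — equivalently, that the eigenvector matrix has $m$-independent condition number — is the step that must be handled with care before the Parseval summation can be invoked.
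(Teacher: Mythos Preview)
Your approach mirrors the paper's exactly: reduce to a one-parameter family of 1D iterations via the Fourier sine transform, apply the eigenvalue bounds of Theorem~\ref{NN1dconv} and Lemma~\ref{NNlemma} mode by mode with $\xi_i$ replaced by $\sqrt{\lambda_i+p_m^2}$, and reassemble using Parseval--Plancherel. The paper's own proof is in fact terser than yours on the point you flag as ``the hard part'': it simply asserts $\|g^{[k]}\|_{L^2(\Gamma)}\le\rho(\hat{\mathbb{P}})\max\{\|g^{[k-1]}\|_{L^2(\Gamma)},\|h^{[k-1]}\|_{L^2(\Gamma)}\}$ after invoking Parseval, without explaining how a spectral-radius bound on $\hat{\mathbb{P}}$ becomes a componentwise bound on the coupled pair $(\hat g^{[k]},\hat h^{[k]})$. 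Your observation that the diagonalizing map $(g,h)\mapsto(\eta_1,\eta_2)$ depends only on $\lambda_1,\lambda_2,\delta$---hence is uniform in $m$---is exactly the ingredient needed to make that passage rigorous, and it is not spelled out in the paper.
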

\begin{proof}
If $a=b$, then \eqref{NNreccurence2d} yields 
\begin{equation*}
\begin{bmatrix} 
\hat g^{[k]} \\
\hat h^{[k]} 
\end{bmatrix} 
=  \begin{bmatrix} 
1-4\theta & 0 \\
0 & 1-4\theta 
\end{bmatrix}
\begin{bmatrix} 
\hat g^{[k-1]} \\
\hat h^{[k-1]} 
\end{bmatrix},
\end{equation*}
which upon back Fourier transform shows that the method converges to the exact solution in two iterations for $\theta=1/4$. Also, the algorithm diverges if $\vert 1-4\theta\vert\geq 1$, i.e if $\theta\geq 1/2$. Thus the convergence is linear for $0<\theta<1/2, \theta\neq 1/4$.

Now in the case of unequal subdomain with $\delta > \frac{4\epsilon^2}{c^4}$, it is clear that $\lambda_{1,2}$ are real and positive, so are $\xi_{1,3}$. The iteration matrix $\hat{\mathbb{P}}$ in \eqref{NNreccurence2d} has spectral radius $\rho(\hat{\mathbb{P}}) = \max\left\{\vert 1 - \frac{ \rho_1}{4}\vert, \vert 1 - \frac{ \rho_2}{4}\vert\right\}$. By the recurrence relation \eqref{NNreccurence2d} and using Parseval-Plancherel identity we get
\[
\begin{array}{cc}
\parallel g^{[k]}\parallel_{L^{2}(\Gamma)} \leq \rho(\hat{\mathbb{P}}) \max\left\{\parallel g^{[k-1]}\parallel_{L^{2}(\Gamma)}, \parallel h^{[k-1]}\parallel_{L^{2}(\Gamma)}\right\}, \\
\parallel h^{[k]}\parallel_{L^{2}(\Gamma)} \leq \rho(\hat{\mathbb{P}})\max \left\{\parallel g^{[k-1]}\parallel_{L^{2}(\Gamma)}, \parallel h^{[k-1]}\parallel_{L^{2}(\Gamma)}\right\}.
\end{array}
\]
We get the required result by estimating $\rho(\hat{\mathbb{P}})$ similar to Theorem \eqref{NN1dconv}. \\
The case $\delta < \frac{4\epsilon^2}{c^4}$ can be treated similarly as in Theorem \eqref{NN1dconv}.
\end{proof}
\section{Numerical Illustration}\label{Section3}
First, we present numerical results for the NN algorithm with small time window. The parameter $\epsilon $ is taken as $0.01$, except otherwise stated. The iterations start from a random initial guess and stop as the error reaches a tolerance of $10^{-6}$ in $L^{\infty} \;\text{or}\; L^2$ norm. 
The phase separation process is rapid in time, and consequently small time steps should be taken. We choose $\delta=1\times 10^{-6}$, which falls in the bracket $\delta< \frac{4\epsilon^2}{c^4}$. 
The iteration count for the NN method are shown in Table \ref{table5e} for 1D.
In Fig \ref{NNshorttime}, on the left we plot the error curves for NN method for different values of  $\theta$ for CH equation in 2D, and on the right we show the numerical error and theoretical estimates for $\theta=1/4$.
\vspace{-0.2cm}
\begin{table}
\begin{center}
\begin{tabular}{|p{.7cm}| p{.5cm}| p{.5cm}| p{.5cm}| p{.5cm}| p{.5cm}| p{.5cm}| p{.5cm}| p{.5cm}| p{.5cm}| }\hline
\diagbox{$h$}{$\theta$}&{ 0.l }&{0.2}&{\bf 0.25}&{0.3}&{0.4}&{ 0.5}&{0.6}&{0.7}&{0.8} \\
		\hline1/64&  35& 12& \bf 2& 12&  37& -& -& -& -\\
		\hline1/128& 35& 12& \bf 2& 12&  38& -& -& -& -\\
		\hline1/256& 36& 12& \bf 2& 12&  39& -& -& -& -\\
		\hline1/512& 37& 12& \bf 2& 12&  39& -& -& -& -\\
		\hline
	\end{tabular}
\end{center}
\caption{Number of iteration compared for unequal subdomain in 1D.}
\label{table5e}
\end{table}
\begin{figure}
    \centering
    \subfloat{{\includegraphics[width=5cm,height=3cm]{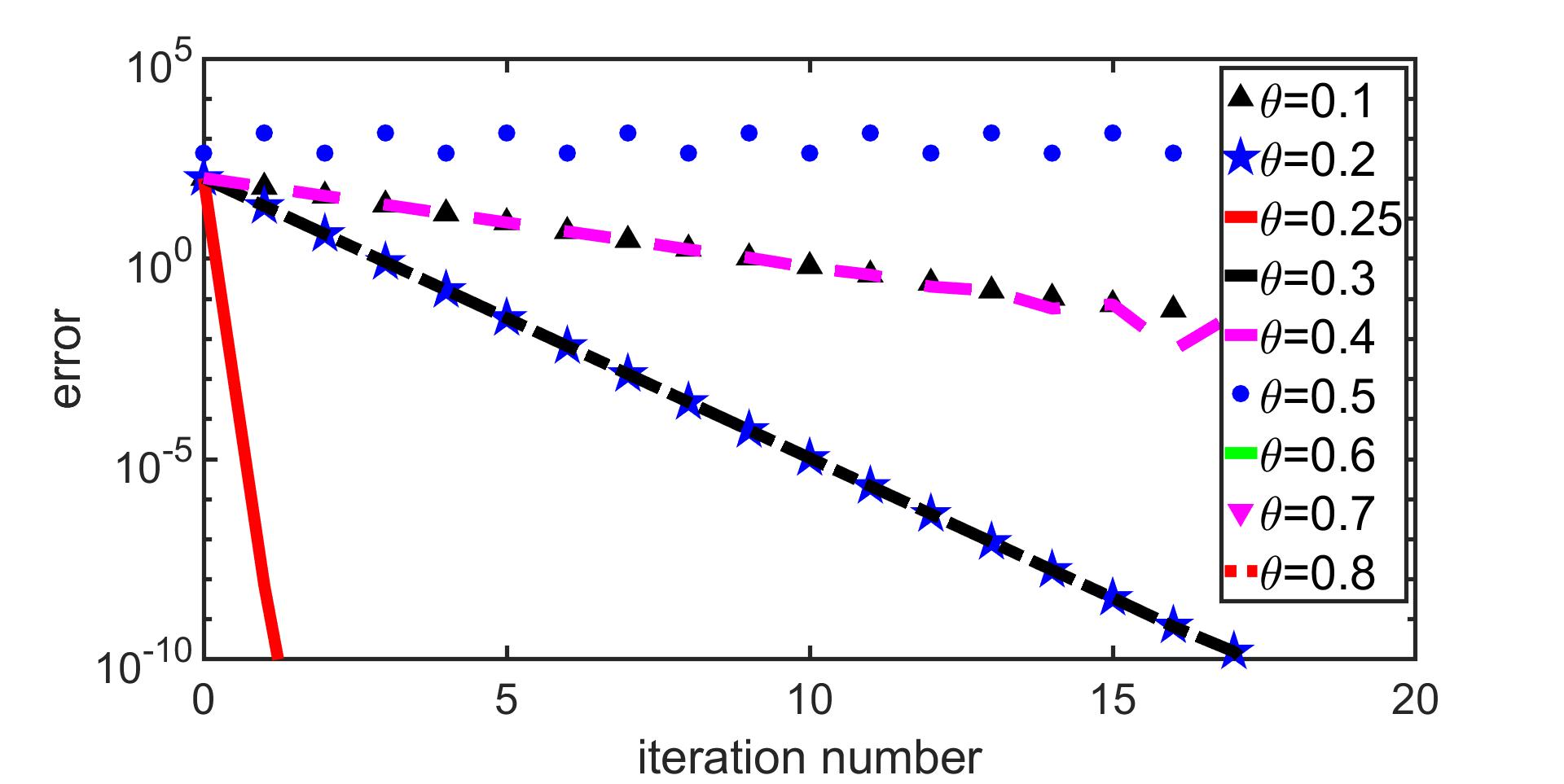}}}
    \subfloat{{\includegraphics[width=5cm,height=3cm]{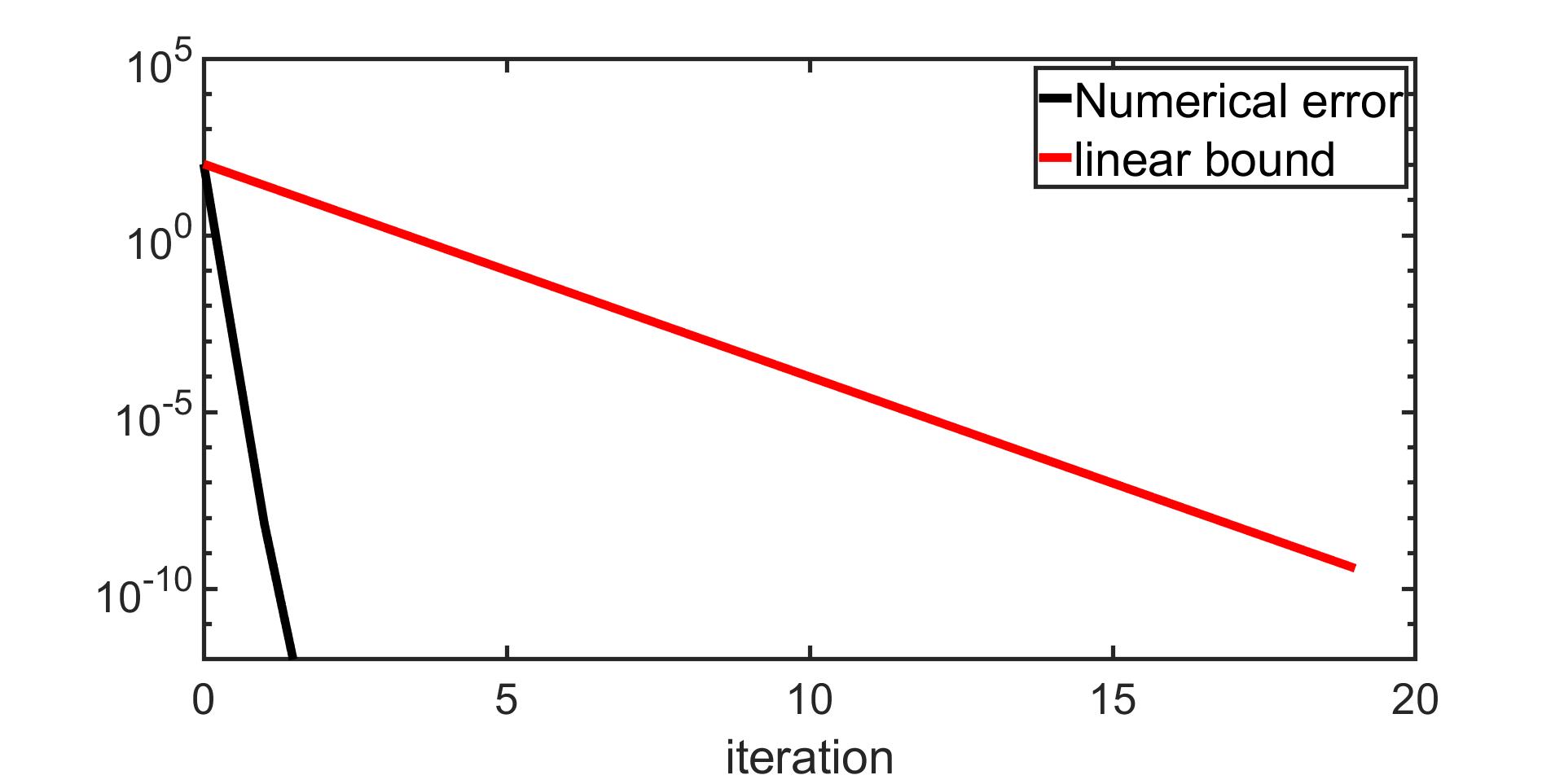} }}
    \caption{Iteration compared in 2D (left), and comparison of theoretical and numerical error (right)  with mesh size $h=1/64$.}
    \label{NNshorttime}
\end{figure}
\vspace{-1cm}
For the CH equation, phase coarsening stage is slow in time, and so one chooses relatively larger time step to reduce the total amount of calculation. We choose $\delta = 1\times 10^{-3}$ in our experiment that falls under $\delta > \frac{4\epsilon^2}{c^4}$. 
In this case, the iteration count for the NN method are shown in Table \ref{table5ef}.
 \begin{table}[t]
\begin{center}
\begin{tabular}{|p{.7cm}| p{.5cm}| p{.5cm}| p{.5cm}| p{.5cm}| p{.5cm}| p{.5cm}| p{.5cm}| p{.5cm}| p{.5cm}| }\hline
\diagbox{$h$}{$\theta$}&{ 0.l }&{0.2}&{\bf 0.25}&{0.3}&{0.4}&{ 0.5}&{0.6}&{0.7}&{0.8} \\
		\hline1/64&  36& 13& \bf 2& 13&  39& -& -& -& -\\
		\hline1/128& 37& 13& \bf 2& 13&  40& -& -& -& -\\
		\hline1/256& 38& 13& \bf 2& 13&  41& -& -& -& -\\
		\hline1/512& 38& 13& \bf 2& 14&  41& -& -& -& -\\
		\hline
	\end{tabular}
\end{center}
\caption{Number of iteration compared for unequal subdomain in 1D.}
\label{table5ef}
\end{table}
In Fig \ref{NNlongtime}, on the left we plot the error curves for NN method for different values of  $\theta$ for CH equation in 2D, and on the right we show the numerical error and theoretical estimates for $\theta=1/4$. 
\begin{figure}
    \centering
    \subfloat{{\includegraphics[width=5cm,height=3cm]{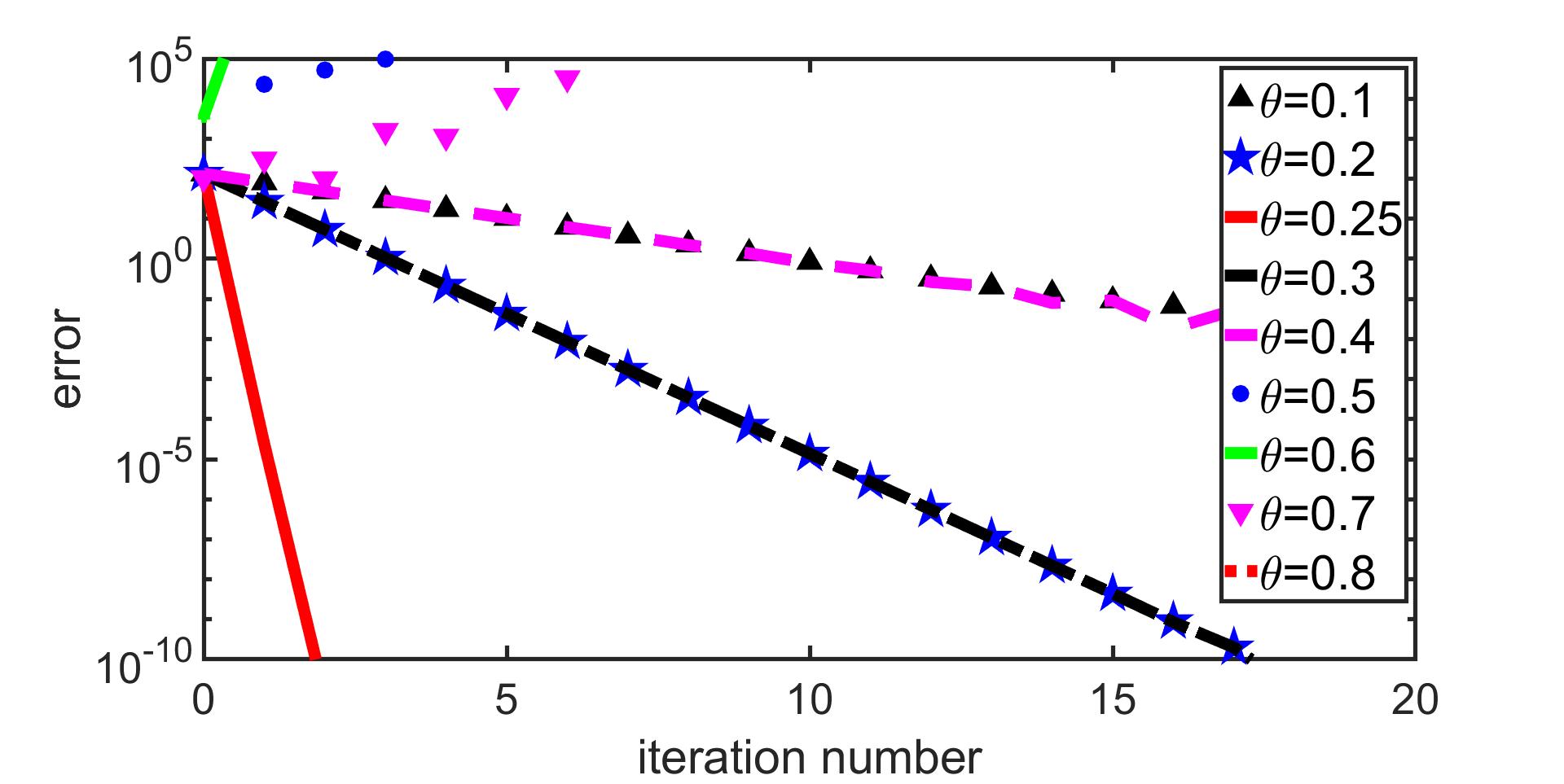}}}
    \subfloat{{\includegraphics[width=5cm,height=3cm]{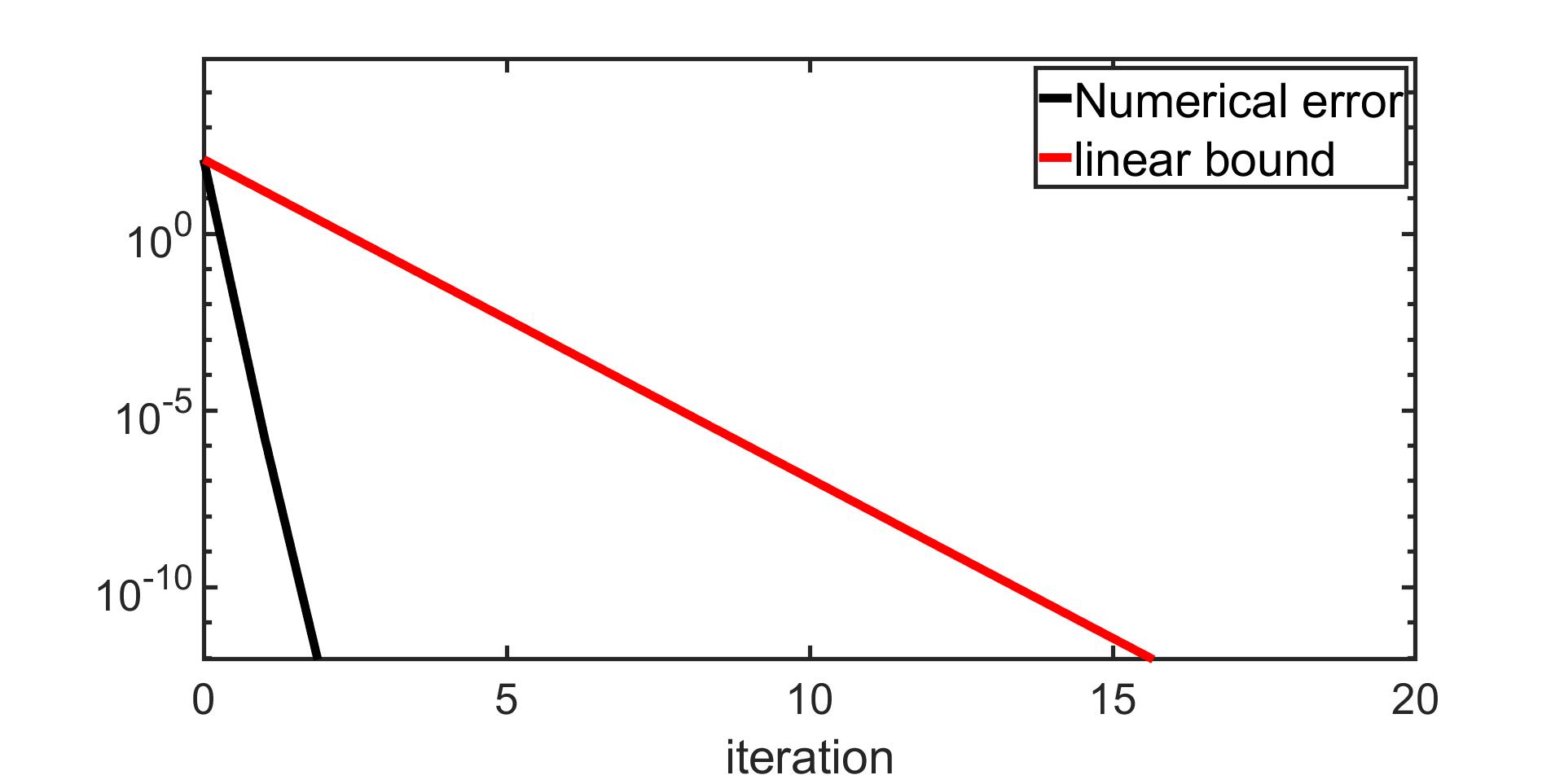} }}
    \caption{Iteration compared in 2D (left), and comparison of theoretical and numerical error (right)  with mesh size $h=1/64$.}
    \label{NNlongtime}
\end{figure}
\section{Conclusions}
We studied the Neumann-Neumann method for the CH equation for two subdomains. We proved convergence estimates for the case of one dimensional NN and also extended the analysis to the 2D CH equation using Fourier techniques. Lastly we validate our theoretical findings with numerical experiments.
\paragraph{Acknowledgement:} I wish to express my appreciation to Dr. Bankim C. Mandal for his constant support and stimulating suggestions and also like to thank the CSIR India for the financial assistance  and IIT Bhubaneswar for research facility.
\vspace{-.8cm}
\bibliographystyle{spmpsci}
\bibliography{author2}
\nocite{*}
\end{document}